\newtheorem{theorem}{Theorem}
\newtheorem{remark}[theorem]{Remark}
\begin{document}

%\markboth{B.~CANO AND MJ.~MORETA}{Simplified exponential Runge-Kutta methods without order reduction}

\title{ SIMPLIFIED EXPLICIT EXPONENTIAL RUNGE-KUTTA METHODS WITHOUT ORDER REDUCTION}

\author{Bego\~na Cano
\thanks{Departamento de  Matem\'atica Aplicada, IMUVA, Universidad de Valladolid,  Spain \\ Email: bcano@uva.es}
\and
Mar\'\i a Jes\'us Moreta
\thanks{Departamento de An\'alisis Econ\'omico y Econom\'\i a Cuantitativa,  IMUVA, Universidad Complutense de Madrid, Spain\\ Email: mjesusmoreta@ccee.ucm.es}}

\date{}

\maketitle

\begin{abstract}
In a previous paper, a technique was suggested to avoid order reduction with any explicit exponential Runge-Kutta method when integrating initial boundary value nonlinear problems with time-dependent boundary conditions. In this paper, we significantly simplify the full discretization formulas to be applied under conditions which are nearly always satisfied in practice. Not only a simpler linear combination of $\varphi_j$-functions is given for both the stages and the solution, but also the information required on the boundary is so much simplified that, in order to get local order three, it is no longer necessary to resort to numerical differentiation in space. The technique is then shown to be computationally competitive against other widely used methods with high enough stiff order through the standard method of lines.

\end{abstract}

%\begin{classification}
%65M12, 65M20
%\end{classification}

%\begin{keywords}
%Exponential Runge-Kutta methods, Avoiding order reduction in time, Efficiency
%\end{keywords}

\section{Introduction}
Exponential methods have become valuable in the last two decades due to the recent development of Krylov methods to calculate exponential functions of matrices applied over vectors in reasonable computational time \cite{GG,GRT,HO2,HL97,ML}. Integrating in time reaction-diffusion problems  with standard Runge-Kutta (RK) methods  is well-known to require an implicit integration so that stability is observed without imposing impractical CFL conditions. However, exponential Runge-Kutta methods manage to integrate those problems in an \lq explicit' way, just at the cost of calculating iteratively exponential functions of matrices over vectors. It is justified in the literature \cite{T} that there are situations where it is cheaper to calculate the latter than solving the linear systems in the stages of implicit standard RK methods. This corresponds to the case where a good preconditioner is not available to solve efficiently those linear systems.

On the other hand, when using the standard method of lines to integrate initial boundary value problems, order reduction in time turns up with both standard and exponential RK methods. For the latter, stiff order conditions are given in \cite{LO0,HO} which allow to construct methods which avoid that order reduction. However, those restrictions may imply that the computational cost is bigger to get a desired accuracy; even that the required number of stages to get a  given stiff order of accuracy increases with respect to that necessary for the classical one. In contrast, the technique which is suggested in \cite{cm_erk} allows to avoid order reduction with any given explicit exponential RK method (EERK) just by adding in the formulas some information of the problem at the boundary. Since the number of nodes at the boundary is negligible compared to the total number of nodes, the computational cost of adding those terms is also negligible. Furthermore, as it will be seen in the numerical experiments, adding those terms may imply that Krylov subroutines converge more quickly (see \cite{CR1} for an explanation for that).

However, there are still a couple of items which can be improved in the technique which is suggested in \cite{cm_erk}.

On the one hand, although analytic expressions are given for the data of the problem (i.e. the boundary conditions and the source term and their derivatives), numerical differentiation in space may be necessary to approximate the required information on the boundary to get local order $3$. With Robin/Neumann boundary conditions, even numerical differentiation in time may be required. Furthermore, to get local order $4$, numerical differentiation in time and space may be necessary with both Dirichlet and Robin/Neumann boundary conditions. Although using numerical differentiation at the nodes on the boundary is not expensive, it would be better not to use it so as to avoid possible instabilities when the space grid or the time stepsize are very small.

On the other hand, using space numerical differentiation to calculate those boundary values implies that a CFL condition has been necessary in \cite{cm_erk} to prove the required order of convergence. Fortunately, this CFL condition is much weaker than that required to prove convergence with an explicit RK method, but it would also be better not having to impose it.

In this paper we will justify that, under some conditions on the coefficients of the method, which most EERK methods satisfy, some simplifications on the required boundary values to avoid order reduction can be performed. Thanks to them, no numerical differentiation in space will be required to achieve local order $3$ with both Dirichlet and Robin/Neumann boundary conditions, so that the weak CFL condition is no longer required in such a case. Moreover, simplified expressions to get local order $4$ are given and, although numerical differentiation is not completely avoided, it is not required for the calculation of the stages.

The paper is then structured as follows. The first section gives some preliminaries. The second one justifies the simplification of the full discretization formulas under certain assumptions. The third one proves that some of the simplifying assumptions are mostly always satisfied. Finally, some numerical experiments are given with both Dirichlet and Robin/Neumann boundary conditions, which confirm that using the suggested technique is cheaper than using the standard method of lines with other EERK methods which have been especially constructed to get the desired stiff order.

\section{Preliminaries}
\label{Preliminares}

In this paper, we are interested in integrating an initial boundary value problem of this type
\begin{eqnarray}
\label{laibvp}
\begin{array}{rcl}
u'(t)&=&Au(t)+f(t,u(t)), \quad  0\le t \le T,\\
u(0)&=&u_0 \in X,\\
\partial u(t)&=&g(t)\in Y, \quad  0\le t \le T,
\end{array}
\end{eqnarray}
under the assumptions stated in \cite{cm_erk}. For the time integration, explicit exponential Runge-Kutta integrators will be considered, which, when applied to an ODE differential system like this
\begin{eqnarray}
\dot{U}(t)&=& B U(T)+F(t,U(t)), \nonumber \\
 U(t_0)&=&U_0, \nonumber
\end{eqnarray}
where $B$ is a matrix, are defined by the following formulas
\begin{eqnarray}
K_{n,i}&=&e^{c_i k B}U_n+k \sum_{j=1}^{i-1} a_{ij}(k B) F(t_n+c_j k, K_{n,j}), \quad i=1,\dots,s, \label{etapas} \\
U_{n+1}&=&e^{k B}U_n+k \sum_{i=1}^s b_i(k B)F(t_n+c_i k,K_{n,i}),
\label{eerk}
\end{eqnarray}
where $k$ is the timestepsize and $U_n$ the approximation to $U(t_0+n k)$. Moreover, most methods of this type satisfy
\begin{eqnarray}
a_{ij}(z)&=&\sum_{l=1}^s \lambda_{i,j,l} \varphi_l( c_i z), \nonumber \\
b_i(z)&=& \sum_{l=1}^s \mu_{i,l} \varphi_l(z), \label{coefs}
\end{eqnarray}
where
$$
\varphi_l(z)=\int_0^1 e^{(1-\theta)z}\frac{\theta^{l-1}}{(l-1)!}d \theta, \quad l\ge 1,$$
 and $\lambda_{i,j,l}$ and $\mu_{i,l}$ are certain constants. (For some methods, it is allowed that the arguments of $\varphi_l$ in $a_{ij}$ depend on $c_r z$ for some $c_r \neq c_i$, which case is considered in \cite{cm_erk}, but in this paper we assume that the argument is always $c_i z$, so that the expressions for the formulas which manage to avoid order reduction are much simplified.) We will also assume that $\sum_{j=1}^s a_{ij}(0)=c_i$ (a standard simplifying assumption for Runge-Kutta methods), which implies that
\begin{eqnarray}
\sum_{j,l=1}^s \frac{\lambda_{i,j,l}}{l!}=c_i. \label{lambda}
\end{eqnarray}

For the space discretization, we consider methods satisfying the quite general assumptions in \cite{cm_erk}. In particular, we assume that, in order to integrate the elliptic problem
$$A u= f, \quad \partial u =g,$$
we use a certain matrix $A_{h,0}$ and certain boundary operators $C_h$ and $D_h$, so that the elliptic projection $U_h$ which approximates the solution of this problem is
\begin{eqnarray}
A_{h,0} U_h +C_h g= P_h f +D_h \partial f,
\label{sd}
\end{eqnarray}
where $P_h$ is a certain projection on nodal values.

In order to avoid the order reduction which turns up when considering the standard method of lines to integrate (\ref{laibvp}),  a modification is suggested in \cite{cm_erk}. The latter is based on integrating (\ref{laibvp}) firstly in time with the EERK method. For that, the exponential-type functions are substituted by appropriate initial boundary value problems for which suitable boundaries must be suggested. Then, the space discretization of those intermediate problems is performed.

If local order $p+1$ is searched for and the EERK method has classical order $\ge p$, the final formulas after the full discretization considering this modification consist of the following for $p=1,2,3$:

\subsection{$p=1$}
\begin{eqnarray}
K_{n,i,h}&=&e^{c_i k A_{h,0}}U_h^n+c_i k \varphi_1(c_i k A_{h,0})C_h \partial u(t_n)+k \sum_{j=1}^{i-1} \sum_{l=1}^s \lambda_{i,j,l}\varphi_l(c_i k A_{h,0})F_{n,j,h}, \nonumber \\
U_h^{n+1}&=&e^{k A_{h,0}}U_h^n+k \varphi_1(K A_{h,0})[C_h \partial u(t_n)-D_h \partial Au(t_n)]+k^2 \varphi_2(K A_{h,0})C_h \partial A u(t_n) \nonumber \\
&&+k \sum_{i=1}^s \sum_{l=1}^s \mu_{i,l} [\varphi_l(k A_{h,0})F_{n,i,h}+k \varphi_{l+1} (k A_{h,0}) C_h \partial f(t_n,u(t_n))], \label{for1}
\end{eqnarray}
where
$$F_{n,j,h}=f(t_n+c_j k, K_{n,j,h}).$$
\begin{remark}
As stated in \cite{cm_erk}, the required expresions on the boundary in (\ref{for1}) can  be exactly calculated in terms of data with Dirichlet boundary conditions since, in such a case,
$$\partial u(t_n)=g(t_n), \quad \partial f(t_n,u(t_n))=f(t_n,g(t_n)), \quad \partial A u(t_n)=\dot{g}(t_n)-f(t_n,g(t_n)).$$
As for Robin/Neumann boundary conditions, again $\partial u(t_n)=g(t_n)$, but $\partial f(t_n,u(t_n))$ must be approximated through the numerical approximation that the proper space discretization  (\ref{sd}) gives for $u(t_n)$ on the boundary. That does not require numerical differentiation to approximate those values and thus no additional type of errors in the global error of the full discretization is added except for that coming from the chosen space discretization (\ref{sd}) itself.
\end{remark}
\subsection{$p=2$}
\begin{eqnarray}
K_{n,i,h}&=&e^{c_i k A_{h,0}}U_h^n+c_i k \varphi_1(c_i k A_{h,0})[C_h \partial u(t_n)-D_h \partial A u(t_n)]
\nonumber \\
&&+(c_i k)^2 \varphi_2(c_i k A_{h,0})C_h \partial A u(t_n) \nonumber \\
&&+k \sum_{j=1}^{i-1} \sum_{l=1}^s \lambda_{i,j,l}[\varphi_l(c_i k A_{h,0})F_{n,j,h}+c_i k \varphi_{l+1}(c_i k A_{h,0})C_h \partial f(t_n,u(t_n))], \nonumber \\
U_h^{n+1}&=&e^{k A_{h,0}}U_h^n+k \varphi_1(k A_{h,0})[C_h \partial u(t_n)-D_h \partial Au(t_n)]
\nonumber \\
&&+k^2 \varphi_2(k A_{h,0})[C_h \partial A u(t_n)-D_h \partial A^2 u(t_n)]+k^3 \varphi_3(k A_{h,0})C_h \partial A^2 u(t_n) \nonumber \\
&&+k \sum_{i=1}^s \sum_{l=1}^s \mu_{i,l}\big[\varphi_l(k A_{h,0})F_{n,i,h} \nonumber \\
&&\hspace{2.5cm}+k \varphi_{l+1} (k A_{h,0}) [C_h \partial f_{n,i,1}-D_h \partial A f(t_n,u(t_n))]\nonumber \\
&&\hspace{2.5cm}+k^2 \varphi_{l+2}(k A_{h,0})C_h \partial A f(t_n,u(t_n))\big], \label{for2}
\end{eqnarray}
where
$$
f_{n,i,1}=f(t_n+c_i k,u(t_n)+c_i k \dot{u}(t_n)).$$
Using similar arguments as in \cite{cm_erk}, this expression can be substituted by this other one without losing local order $3$ since the difference between both expressions is $O(k^2)$:
\begin{eqnarray}
f_{n,i,1}\approx  f(t_n,u(t_n))+c_i k[f_t(t_n,u(t_n))+f_u(t_n,u(t_n))\dot{u}(t_n)].
\label{fni1}
\end{eqnarray}
\begin{remark}
In general,  the expressions on the boundary in (\ref{for2}) can just be calculated now in an approximated way even with Dirichlet boundary conditions since
\begin{eqnarray}
\partial A^2 u= \partial [\ddot{u}-f_t -f_u \dot{u}- A f],
\label{a2u}
\end{eqnarray}
and, in order to approximate $\partial A f(t_n,u(t_n))$, numerical differentiation in space is required. As for Robin/Neumann boundary conditions, $\dot{u}(t_n)$ at the boundary must also be approximated through the numerical values given by the space discretization of the problem at the boundary. More precisely, numerical differentiation in time is also required. All this leads to some additional terms on the global error depending on the particular methods of numerical differentiation being used. But what is more important, in order to recover the classical order of the method, a CFL like
\begin{eqnarray}
|\frac{k}{h^\gamma}|\le C,
\label{CFL}
\end{eqnarray}
is required, where $\gamma$ is the order of the space derivative to be approximated. This condition is much weaker than the one which would be required with an explicit RK method because the order of the space derivatives to be approximated are smaller than the order of the space derivatives in $A$. (Notice, for example, that if $A$ is the second derivative in space in $1$ dimension and $f=f(u)$,
 $$ Af(u)=f''(u)u_x^2+f'(u)u_{xx}=f''(u)u_x^2+f'(u)(\dot{u}-f(u)),$$
 and just $u_x$ must be numerically approximated when using Dirichlet boundary conditions. With a Neumann boundary condition,
 as
 $$
 \partial (A f(u))=f'''(u) u_x^3+3 f''(u)u_x(\dot{u}-f(u))+f'(u)(\dot{u}_x-f'(u)u_x),$$
 no numerical differentation in space is required in this example in fact.) In any case, it would be even better if a condition like (\ref{CFL}) did not turn up with any problem.
\end{remark}

\subsection{$p=3$}
\begin{eqnarray}
K_{n,i,h}&=&e^{c_i k A_{h,0}}U_h^n+c_i k \varphi_1(c_i k A_{h,0})[C_h \partial u(t_n)-D_h \partial A u(t_n)]
\nonumber \\
&&+(c_i k)^2 \varphi_2(c_i k A_{h,0})[C_h \partial A u(t_n)-D_h \partial A^2 u(t_n)] \nonumber \\
&&+(c_i k)^3 \varphi_3(c_i k A_{h,0})C_h \partial A^2 u(t_n) \nonumber \\
&&+k \sum_{j=1}^{i-1} \sum_{l=1}^s \lambda_{i,j,l}\big[\varphi_l(c_i k A_{h,0})F_{n,j,h}
\nonumber \\
&&\hspace{2.5cm}+c_i k \varphi_{l+1}(c_i k A_{h,0})[C_h \partial f_{n,j,1}-D_h \partial A f(t_n,u(t_n))] \nonumber \\
&&\hspace{2.5cm}+(c_i k)^2 \varphi_{l+2}(c_i k A_{h,0})C_h \partial A f(t_n,u(t_n))\big] \nonumber \\
U_h^{n+1}&=&e^{k A_{h,0}}U_h^n+k \varphi_1(k A_{h,0})[C_h \partial u(t_n)-D_h \partial Au(t_n)]
\nonumber \\
&&+k^2 \varphi_2(k A_{h,0})[C_h \partial A u(t_n)-D_h \partial A^2 u(t_n)] \nonumber \\
&&+k^3 \varphi_3(k A_{h,0})[C_h \partial A^2 u(t_n)-D_h \partial A^3 u(t_n)] \nonumber \\
&&+k^4 \varphi_4(k A_{h,0}) C_h \partial A^3 u(t_n) \nonumber \\
&&+k \sum_{i=1}^s \sum_{l=1}^s \mu_{i,l}\big[\varphi_l(k A_{h,0})F_{n,i,h} \nonumber \\
&&\hspace{2.5cm}+k \varphi_{l+1} (k A_{h,0}) [C_h \partial f_{n,i,2}-D_h \partial A f_{n,i,1}]\nonumber \\
&&\hspace{2.5cm}+k^2 \varphi_{l+2}(k A_{h,0})[C_h \partial A f_{n,i,1}-D_h \partial A^2 f(t_n,u(t_n))] \nonumber \\
&&\hspace{2.5cm}+k^3 \varphi_{l+3}(k A_{h,0})C_h \partial A^2 f(t_n,u(t_n))
\big], \label{for3}
\end{eqnarray}
where
\begin{eqnarray}
f_{n,i,2}&=&f\bigg(t_n+c_i k, u(t_n)+c_i k A u(t_n)+\frac{(c_i k)^2}{2} A^2 u(t_n) \nonumber \\
&&+k \sum_{j,l} \lambda_{i,j,l} \big[ \frac{1}{l!} f(t_n+c_j k, u(t_n)+c_j k \dot{u}(t_n))+ \frac{c_i k}{(l+1)!} A f(t_n,u(t_n))\big]\bigg). \nonumber
\end{eqnarray}
Again, without losing local order 4, and with similar arguments to those in \cite{cm_erk},  $f_{n,i,2}$ can be  substituted by the following expression which differs from the previous one in $O(k^3)$ and for which (\ref{lambda}) and (\ref{laibvp}) have been used. For the sake of brevity, we assume that all terms are evaluated at $t_n$ or $(t_n,u(t_n))$.
\begin{eqnarray}
f_{n,i,2}&\approx & f+c_i k[f_t+f_u\dot{u}]+\frac{c_i k^2}{2} f_u \big[c_i(\ddot{u}-f_t-f_u\dot{u})+(2 \sum_{j,l} \frac{\lambda_{i,j,l}}{(l+1)!}-c_i) Af\big] \nonumber \\
&&+k^2 (\sum_{j,l=1}^s \frac{\lambda_{i,j,l}}{l!}c_j ) f_u(f_t+f_u\dot{u})+\frac{(c_i k)^2}{2}\big[f_{tt}+2 f_{tu}\dot{u}+f_{uu}[\dot{u},\dot{u}]\big].\nonumber
\end{eqnarray}
\begin{remark}
Notice that,  from (\ref{laibvp}),
$$
A^3 u=[\stackrel{\dots}{u}-f_{tt}-2 f_{tu} \dot{u}-f_{uu}[\dot{u},\dot{u}]-f_u \ddot{u}-A f_t-A (f_u \dot{u})-A^2 f].
$$
Then, with Dirichlet boundary conditions, in order to calculate the boundary of the last two terms in the previous expression, it is necessary to calculate a certain derivative in space of the exact solution at the boundary, and also the derivative with respect to time of that space derivative. This leads to the fact that, apart from the CFL condition (\ref{CFL}) in order to prove convergence, a term in the global error coming from both numerical differentiation in space and time turns up (see \cite{cm_erk}).

As for Robin/Neumann boundary conditions, because of the fifth term above, it is also necessary in general to resort to numerical differentiation to approximate a second derivative in time.

\end{remark}

\section{Simplification of full discretization formulas under certain assumptions}

In this section we will see that, under certain conditions on the coefficients of the EERK method (\ref{coefs}), the above formulas (\ref{for1}),(\ref{for2}),(\ref{for3}) can be significantly simplified and what is more important, the required boundary values can be calculated in a much easier way eliminating in some cases the necessity to resort to numerical differentiation to approximate those values.

The assumptions on (\ref{coefs}) which we will consider for all $p=1,2,3$ are the following
\begin{eqnarray}
\sum_{i=1}^s \mu_{i,1}=1, \quad \sum_{i=1}^s \mu_{i,l}=0 \quad (l=2,\dots,s). \label{cond_mu}
\end{eqnarray}
We will also write the obtained formulas as the corresponding linear combination of $\varphi_l$-functions, so that existing Krylov subroutines can be directly applied \cite{nw}. Moreover, we will make an effort to write the necessary values on the boundary in the more explicit way we can in terms of data. (Again, all the terms on the boundary are assumed to be evaluated at $t_n$ or $(t_n,u(t_n))$.)

\subsection{$p=1$}

We notice that, using (\ref{cond_mu}) for $U_h^{n+1}$, (\ref{for1}) can be written as
\begin{eqnarray}
K_{n,i,h}&=&e^{c_i k A_{h,0}}U_h^n+ k \varphi_1(c_i k A_{h,0})[\sum_{j=1}^{i-1} \lambda_{i,j,1} F_{n,j,h}+c_i C_h \partial u] \nonumber \\
&&+k \sum_{l=2}^s \varphi_l(c_i k A_{h,0})\sum_{j=1}^{i-1}  \lambda_{i,j,l}F_{n,j,h}, \nonumber \\
U_h^{n+1}&=&e^{k A_{h,0}}U_h^n+k \varphi_1(k A_{h,0})\big[\sum_{i=1}^s \mu_{i,1} F_{n,i,h}+ C_h \partial u-D_h \partial[\dot{u}-f] \big] \nonumber \\
&&+k \varphi_2(k A_{h,0})[\sum_{i=1}^s  \mu_{i,2}F_{n,i,h}+k C_h \partial\dot{u}] \nonumber \\
&&+k  \sum_{l=3}^s  \varphi_l(k A_{h,0})\sum_{i=1}^s \mu_{i,l} F_{n,i,h}, \nonumber
\end{eqnarray}

\begin{remark}
We notice that here we have used the first condition in (\ref{cond_mu}) to simplify the expression on the boundary multiplying $\varphi_2(k A_{h,0})$ in the formula for $U_h^{n+1}$. In such a way, that can be exactly calculated in terms of data with both Dirichlet and Robin/Neumann boundary conditions. On the other hand, the second conditions in (\ref{cond_mu}) allow to annilihate the terms on the boundary multiplying $\varphi_l(k A_{h,0})C_h$ for $l\ge 3$.
\label{rem1}
\end{remark}

\subsection{$p=2$}
In this case, we will also assume that the coefficients $\lambda_{i,j,l}$ in (\ref{coefs}) satisfy
\begin{eqnarray}
\sum_{j=1}^{i-1} \lambda_{i,j,1}=c_i, \quad \sum_{j=1}^{i-1} \lambda_{i,j,l}=0 \quad (l=2,\dots,s), \quad i=1,\dots,s. \label{cond_lambda}
\end{eqnarray}
This is not  necessary to calculate the required boundary values in the stages without resorting to numerical differentiation, but it allows to simplify the expressions considerably.
More precisely, (\ref{for2}) reduces to
\begin{eqnarray}
K_{n,i,h}&=&e^{c_i k A_{h,0}}U_h^n+ k \varphi_1(c_i k A_{h,0})\big[\sum_{j=1}^{i-1} \lambda_{i,j,1} F_{n,j,h}+c_i [C_h \partial u-D_h \partial[\dot{u}-f]\big] \nonumber \\
&&+k \varphi_2(c_i k A_{h,0})[\sum_{j=1}^{i-1}\lambda_{i,j,2} F_{n,j,h}+c_i^2 k C_h \partial \dot{u}]  \nonumber \\
&&+k \sum_{l=3}^s \varphi_l(c_i k A_{h,0})\sum_{j=1}^{i-1}  \lambda_{i,j,l} F_{n,j,h}, \nonumber \\
U_h^{n+1}&=&e^{k A_{h,0}}U_h^n+k \varphi_1(k A_{h,0})\big[\sum_{l=1}^s \mu_{i,1} F_{n,i,h}+ C_h \partial u-D_h \partial[\dot{u}-f] \big] \nonumber \\
&&+k \varphi_2(k A_{h,0})\bigg[\sum_{i=1}^s  \mu_{i,2}F_{n,i,h}+k C_h \partial[\dot{u}+k(\sum_{i=1}^s \mu_{i,1} c_i)(f_t+f_u \dot{u})] \nonumber \\
&&\hspace{2cm}-k D_h \partial[\ddot{u}-f_t-f_u\dot{u}]\bigg] \nonumber \\
&&+k \varphi_3(k A_{h,0})\bigg[\sum_{i=1}^s  \mu_{i,3}F_{n,i,h}+k^2 C_h \partial[\ddot{u}+(\sum_{i=1}^s \mu_{i,2} c_i-1)(f_t+f_u\dot{u})]\bigg] \nonumber \\
&&+k  \sum_{l=4}^{s+1}  \varphi_l(k A_{h,0})\bigg[\sum_{i=1}^s \mu_{i,l} F_{n,i,h}+k^2 (\sum_{i=1}^s \mu_{i,l-1} c_i)C_h \partial[f_t+f_u\dot{u}]\bigg], \label{fdu2}
\end{eqnarray}
where, in order to obtain the expression in $U_h^{n+1}$, (\ref{cond_mu}), (\ref{fni1}) and (\ref{a2u}) have been used. (It is understood that $\mu_{i,s+1}=0$.)
\begin{remark}
We can then see that, in contrast to (\ref{for2}), the required boundary values can be calculated exactly in terms of data with Dirichlet boundary conditions and using the approximation of the space discretization of (\ref{laibvp}) at the boundary together with numerical differentiation in time when considering Robin/Neumann boundary conditions. (For the latter, notice for example that, with Neumann boundary condition in one dimension,
$$
\partial f_u(t,u)\dot{u}=f_{u,x}(t,u)\dot{u}+f_{uu}(t,u)u_x\dot{u}+f_u(t,u)\dot{u}_x,
$$
and then $u_x=g$, $\dot{u}_x=\dot{g}$ and it is just $\dot{u}$ which must be approximated through numerical differentiation in time from the approximated values at the boundary.)
In any case, numerical differentiation in space is not necessary to approximate the required boundary values with either Dirichlet or Robin/Neumann boundary conditions. Because of this, under assumptions (\ref{cond_mu}), the term $\nu_h$ concerning the error from the numerical differentiation in space does not turn up in the expression for the global error when $p=2$ in \cite{cm_erk}. And what is more important, condition (\ref{CFL}) is not further required to prove convergence.
\label{rem2}
\end{remark}

\subsection{$p=3$}
\label{sect23}
For this value of $p$, numerical differentiation in space is required for the boundaries in the stages in (\ref{for3}) when considering Dirichlet boundary conditions. However, with the simplification (\ref{cond_lambda}), that is not required any more. As for Robin/Neumann boundary conditions, the numerical differentiation in space might have  been necessary in (\ref{for3}), but it is not either necessary under the same assumptions. More precisely, using (\ref{cond_lambda}) we have
\begin{eqnarray}
K_{n,i,h}&=&e^{c_i k A_{h,0}}U_h^n+k \varphi_1(k A_{h,0})\bigg[\sum_{j=1}^{i-1} \lambda_{i,j,1}F_{n,j,h}+c_i\big[C_h \partial u-D_h[\dot{u}-f]\big]\bigg] \nonumber \\
&&+k \varphi_2(c_i k A_{h,0})\bigg[ \sum_{j=1}^{i-1} \lambda_{i,j,2} F_{n,j,h}+c_i k C_h \partial [c_i \dot{u}+k\big(\sum_{j=1}^{i-1} \lambda_{i,j,1} c_j\big)(f_t+f_u \dot{u})]\nonumber \\
&& \hspace{2.5cm}-c_i^2 k D_h \partial[\ddot{u}-f_t-f_u\dot{u}]\bigg] \nonumber \\
&&+k \varphi_3(c_i k A_{h,0})\bigg[ \sum_{j=1}^{i-1} \lambda_{i,j,3} F_{n,j,h}+c_i k^2 C_h \partial [c_i^2 \ddot{u}+(\sum_{j=1}^{i-1} \lambda_{i,j,2} c_j -c_i^2)(f_t+f_u \dot{u})]\bigg]\nonumber \\
&&+k \sum_{l=4}^{s+1} \varphi_l(k A_{h,0})\bigg[\sum_{j=1}^{i-1} \lambda_{i,j,l} F_{n,j,h}+c_i k^2 \big(\sum_{j=1}^{i-1} \lambda_{i,j,l-1} c_j\big) C_h \partial [f_t+f_u\dot{u}]\bigg], \label{fdk3}
\end{eqnarray}
where $\lambda_{i,j,l}$ is understood to be zero for $l>s$. We notice that
just numerical differentiation in time with Robin/Neumann boundary conditions would be required.

As for $U_h^{n+1}$, using also (\ref{cond_mu}), the formulas in (\ref{for3}) can be simplified to the following:
\begin{eqnarray}
U_h^{n+1}&=&e^{k A_{h,0}}U_h^n\nonumber \\
&&+k \varphi_1(k A_{h,0})\big[ \sum_{i=1}^s \mu_{i,1} F_{n,i,h}+C_h \partial u-D_h \partial[\dot{u}-f]\big] \nonumber \\
&&+k \varphi_2(k A_{h,0})\big[ \sum_{i=1}^s \mu_{i,2} F_{n,i,h}+k C_h \partial b_{2,n,c}-k D_h \partial b_{2,n,d}\big] \nonumber \\
&&+k \varphi_3(k A_{h,0})\big[ \sum_{i=1}^s \mu_{i,3} F_{n,i,h}+k^2 C_h \partial b_{3,n,c}-k^2 D_h \partial b_{3,n,d}\big] \nonumber \\
&&+k \varphi_4(k A_{h,0})\big[ \sum_{i=1}^s \mu_{i,4} F_{n,i,h}+k^2 C_h \partial b_{4,n,c}-k^2 D_h \partial b_{4,n,d} \big] \nonumber \\
&&+k \sum_{l=5}^{s+2} \varphi_l(k A_{h,0})\big[ \sum_{i=1}^s \mu_{i,l} F_{n,i,h}+k^2 C_h \partial b_{l,n,c}-k^2 D_h \partial b_{l,n,d} \big]
 \label{fdu3}
\end{eqnarray}
where it is understood that $\mu_{i,l}=0$ for $l>s$ and the boundary terms $b_{l,n,c}$ and $b_{l,n,d}$ ($l\ge 2$) are given by
\begin{eqnarray}
b_{2,n,c}&=& \dot{u}+k\big( \sum_{l=1}^s \mu_{i,1} c_i) (f_t+f_u \dot{u}) +k^2 \sum_{i=1}^s \mu_{i,1} \big(\sum_{j,l} \lambda_{i,j,l} \frac{c_j}{l!}-\frac{c_i^2}{2}\big) f_u (f_t+f_u \dot{u})   \nonumber \\
&&+\frac{k^2}{2} \sum_{i=1}^s \mu_{i,1} c_i \big(f_u[c_i \ddot{u}+(2 \sum_{j,l}\frac{\lambda_{i,j,l}}{(l+1)!}-c_i) Af]
+c_i(f_{tt}+2 f_{tu} \dot{u}+f_{u,u}[\dot{u},\dot{u}])\big) \nonumber \\
b_{2,n,d}&=&\ddot{u}-f_t-f_u \dot{u}+k\big(\sum_{i=1}^s \mu_{i,1}c_i\big)A (f_t+f_u \dot{u}), \nonumber \\
b_{3,n,c}&=&(\sum_{i=1}^s \mu_{i,2}c_i)(f_t+f_u \dot{u})+\ddot{u}-f_t-f_u \dot{u} \nonumber \\
&&+\frac{k}{2}\sum_{i=1}^s \mu_{i,2}c_i \big( f_u[c_i \ddot{u}+2(\sum_{j,l}\frac{\lambda_{i,j,l}}{(l+1)!}-c_i)A f+c_i(
f_{tt}+2 f_t f_u \dot{u}+f_{uu}[\dot{u},\dot{u}]\big)\nonumber \\&&+k  \sum_{i=1}^s \mu_{i,2} \big(\sum_{j,l} \lambda_{i,j,l}\frac{c_j}{l!}-\frac{c_i^2}{2}\big) f_u[f_t+f_u \dot{u}]  +k \big( \sum_{i=1}^s \mu_{i,1} c_i \big) A(f_t+f_u \dot{u}),\nonumber \\
b_{3,n,d}&=&\stackrel{\dots}{u}-f_{tt}-2 f_{tu}\dot{u}-f_u \ddot{u}-f_{uu}[\dot{u},\dot{u}]+(\sum_{i=1}^s \mu_{i,2}c_i-1) A(f_t+f_u \dot{u}), \nonumber \\
b_{4,n,c}&=&(\sum_{i=1}^s \mu_{i,3}c_i)(f_t+f_u \dot{u})+k[\stackrel{\dots}{u}-f_{tt}-2 f_{tu}\dot{u}-f_u[\dot{u},\dot{u}]-f_u\ddot{u}-A(f_t+f_u \dot{u})] \nonumber \\
&&+\frac{k}{2}\sum_{i=1}^s \mu_{i,3}c_i\big[ f_u [c_i\ddot{u}+2(\sum_{j,l}\frac{\lambda_{i,j,l}}{(l+1)!}-c_i)Af]+c_i(f_{tt}+2 f_tf_u \dot{u}+f_{uu}[\dot{u},\dot{u}])\big] \nonumber \\
&&+k  \sum_{i=1}^s \mu_{i,3} \big( \sum_{j,l} \lambda_{i,j,l}\frac{c_j}{l!}-\frac{c_i^2}{2}\big) f_u[f_t+f_u \dot{u}]  \nonumber \\
&&+k \big( \sum_{i=1}^s \mu_{i,2} c_i \big) A(f_t+f_u \dot{u}), \nonumber \\
b_{4,n,d}&=&\big( \sum_{i=1}^s \mu_{i,3} c_i\big) A(f_t+f_u \dot{u}), \nonumber
\end{eqnarray}
and, for $l \ge 5$,
\begin{eqnarray}
b_{l,n,c}&=& (\sum_{i=1}^s \mu_{i,l-1}c_i)(f_t+f_u \dot{u})\nonumber \\
&&+\frac{k}{2}\sum_{i=1}^s \mu_{i,l-1}c_i\big(f_u[c_i \ddot{u}+(2 \sum \frac{\lambda_{i,j,l}}{(l+1)!}-c_i) Af]+c_i(f_{tt}+2 f_t f_u \dot{u}+f_{uu}[\dot{u},\dot{u}])\big)  \nonumber \\
&&+k  \sum_{i=1}^s \mu_{i,l-1} \big(\sum_{j,m} \lambda_{i,j,m}\frac{c_j}{m!}-\frac{c_i^2}{2}\big) f_u[f_t+f_u \dot{u}]  \nonumber \\
&&+k \big( \sum_{i=1}^s \mu_{i,l-2} c_i \big) A(f_t+f_u \dot{u}), \nonumber \\
b_{l,n,d}&=& (\sum_{i=1}^s \mu_{i,l-1}c_i)A(f_t+f_u \dot{u}).\nonumber
\end{eqnarray}

\begin{remark}
We notice that, with all Neumann, Robin and Dirichlet boundary conditions, calculating $\partial A(f_t+f_u \dot{u})$ requires in general numerical differentiation in space. Therefore, we need condition (\ref{CFL}) to prove convergence. Moreover, with Dirichlet boundary conditions we require numerical differentiation in time to approximate the first time derivative of some derivative in space and, with Robin/Neumann ones, also the second time derivative of the exact solution itself. No higher order derivatives are required in the rest of the terms.
\label{rem3}
\end{remark}

\section{Discussion on simplifying assumptions}

In this section we will basically prove that condition (\ref{cond_mu}) is satisfied whenever $s\le q$ if $q$ is the classical order of the method. As, looking for efficiency, a small number of stages is searched for, that is not a restriction in practice. On the other hand, although condition (\ref{cond_lambda}) does not come from any condition of consistency, all the most well-known EERK methods satisfy it.

Let us  precisely state the first result.
\begin{theorem}
Let us assume that the EERK method (\ref{eerk}) with coefficients (\ref{coefs}) has classical order $\ge q$ with $q=1,2,3,4$. Then, if $s\le q$, (\ref{cond_mu}) holds.
\label{th}
\end{theorem}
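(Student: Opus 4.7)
The plan is to extract (\ref{cond_mu}) from the classical order conditions by testing the EERK method on a particularly simple problem, namely the scalar non-autonomous linear equation $\dot u(t)=\lambda u(t)+1$ with $u(0)=0$. Its exact value at $t=k$ is $k\,\varphi_1(k\lambda)$, and since the forcing is constant every evaluation of $F$ in (\ref{etapas})--(\ref{eerk}) equals $1$, so the numerical one--step increment reduces to $k\sum_{i=1}^s b_i(k\lambda)$. The assumption of classical order $\ge q$ forces a local error of size $O(k^{q+1})$ for every fixed $\lambda$; since the discrepancy is analytic in $z=k\lambda$, this is equivalent to
\[
\sum_{i=1}^s b_i(z) - \varphi_1(z) = O(z^q), \qquad z\to 0.
\]

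Next, I would insert (\ref{coefs}) and, writing $M_l = \sum_{i=1}^s \mu_{i,l}$, $c_1 = M_1-1$ and $c_l=M_l$ for $l\ge 2$, rephrase the identity above as $\sum_{l=1}^s c_l\,\varphi_l(z) = O(z^q)$; proving (\ref{cond_mu}) then reduces to showing $c_1=\cdots=c_s=0$ whenever $s\le q$. Using the integral representation of $\varphi_l$ one obtains
\[
\sum_{l=1}^s c_l\,\varphi_l(z) = \int_0^1 e^{(1-\theta)z}\,P(\theta)\dd\theta, \qquad P(\theta)=\sum_{l=1}^s c_l\,\frac{\theta^{l-1}}{(l-1)!},
\]
where $P$ is a polynomial of degree at most $s-1$. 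Expanding $e^{(1-\theta)z}$ in powers of $z$ and reading off Taylor coefficients, the $O(z^q)$ condition amounts exactly to
\[
\int_0^1 (1-\theta)^r P(\theta)\dd\theta = 0, \qquad r=0,1,\ldots,q-1.
\]

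After the substitution $\eta=1-\theta$ this says that $\tilde P(\eta):=P(1-\eta)$, still of degree $\le s-1$, is $L^2(0,1)$-orthogonal to $1,\eta,\ldots,\eta^{q-1}$. Because $s\le q$, the span of these monomials contains $\tilde P$ itself, so $\int_0^1 \tilde P(\eta)^2\dd\eta=0$; hence $\tilde P\equiv 0$, $P\equiv 0$, and all $c_l$ vanish, which is precisely (\ref{cond_mu}). The only delicate point in the argument is the very first one, namely observing that the generic classical order hypothesis collapses on the constant-source linear problem to the scalar identity for $\sum_i b_i(z)$; once that is available the rest is a clean orthogonality argument, which in fact works verbatim for arbitrary $q$, not just $q\le 4$.
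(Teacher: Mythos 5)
Your proof is correct, and it reaches the same linear system as the paper but by a genuinely different route at both ends. Where the paper simply cites \cite{HO} for the order conditions (\ref{cp1})--(\ref{cp4}), you rederive them from first principles by testing the scheme on $\dot u=\lambda u+1$, $u(0)=0$: this is a clean way to see that classical order $\ge q$ forces $\sum_i b_i(z)-\varphi_1(z)=O(z^q)$, whose Taylor coefficients at $z=0$ are exactly the quantities in (\ref{cp1})--(\ref{cp4}). Where the paper then asserts that the $s\times s$ coefficient matrix $\bigl(1/(l+r-1)!\bigr)_{r,l}$ is regular and observes that its first column equals the right-hand side, you instead pass to the integral representation of the $\varphi_l$ and turn the system into moment conditions $\int_0^1(1-\theta)^r P(\theta)\,{\rm d}\theta=0$, $r=0,\dots,q-1$, for a polynomial $P$ of degree $\le s-1\le q-1$; the $L^2$-orthogonality argument then kills $P$ outright. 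This buys you two things the paper does not supply: an actual proof of the nonsingularity that the paper only asserts (your orthogonality argument is precisely that proof), and a statement valid for every $q$ with $s\le q$, not just $q\le 4$. The one point worth flagging is definitional rather than mathematical: your first step implicitly takes ``classical order $q$'' to mean order $q$ on nonstiff problems with the splitting $B=\lambda$, $F\equiv 1$ respected, so that the local error on the test equation is $k\bigl(\varphi_1(k\lambda)-\sum_i b_i(k\lambda)\bigr)$; that is indeed the convention of \cite{HO} and of this paper, so the argument is sound, but it deserves a sentence of justification since a reader who understood ``classical order'' as the order of the underlying Runge--Kutta method (coefficients frozen at $z=0$) would obtain different conditions.
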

\begin{proof}
According to \cite{HO}, if the method has classical order $\ge q$, the following conditions must hold:
\begin{eqnarray}
&&q=1, \qquad \sum_{i=1}^s \sum_{l=1}^s \mu_{i,l} \frac{1}{l!}=1, \label{cp1} \\
&&q=2, \qquad \sum_{i=1}^s \sum_{l=1}^s \mu_{i,l} \frac{1}{(l+1)!}=\frac{1}{2}, \label{cp2} \\
&&q=3, \qquad \sum_{i=1}^s \sum_{l=1}^s \mu_{i,l} \frac{1}{(l+2)!}=\frac{1}{6}, \label{cp3} \\
&&q=4, \qquad \sum_{i=1}^s \sum_{l=1}^s \mu_{i,l} \frac{1}{(l+3)!}=\frac{1}{24}. \label{cp4}
\end{eqnarray}
If $s=1$, (\ref{cp1}) leads to $\mu_{1,1}=1$ and (\ref{cond_mu}) follows.  If $s=2$ and $q\ge 2$, (\ref{cp1}) and (\ref{cp2}) lead to a system of two equations in the two unknowns $\mu_{1,1}+\mu_{2,1}$ and $\mu_{1,2}+\mu_{2,2}$. The coefficient matrix of this system is
$$\left( \begin{array}{cc} 1 & 1/2 \\ 1/2 & 1/6 \end{array} \right).$$
Therefore, the  unique solution of this system is
$$ \mu_{1,1}+\mu_{2,1}=1, \quad \mu_{1,2}+\mu_{2,2}=0,$$
which corresponds to (\ref{cond_mu}). For $s=3$ and $s=4$ a similar argument follows. If $q\ge s$, either (\ref{cp1})-(\ref{cp3}) or (\ref{cp1})-(\ref{cp4}) lead to a system of $s$ equations in the $s$ unknowns
$$\sum_{i=1}^s \mu_{i,l}, \quad l=1,\dots,s.$$
The coefficient matrix are both regular and it happens that the first column of those matrices coincide with the source term in the linear system. Therefore, the solution is the first canonical vector, i.e. (\ref{cond_mu}).
\end{proof}

\section{Numerical experiments}

In this section, we have made some numerical experiments in order to corroborate that the full discretization formulas lead to methods which are efficient in two senses. On the one hand, order reduction is avoided, as theoretically justified in \cite{cm_erk}, and the CFL condition (\ref{CFL}), in principle necessary for $p=3$, does not seem very restrictive. On the other hand, as just information on the boundary must be added in order to avoid order reduction, the technique seems cheaper than the use of EERK methods with high stiff order, which requires that the coefficients of the method satisfies some restrictions and, due to that, the local error may be higher and also the number of stages to get a desired order of accuracy.

For the numerical experiments we have used standard Krylov subroutines \cite{nw} since they seem to be the most general for problems which may arise in practice. Of course, for the particular examples in this section, other cheaper FFT techniques could be used, but the aim is to foresee the comparison in efficiency in more complex problems.

\subsection{One-dimensional problem}

\begin{figure}
\vspace{-4cm}
\begin{center}
\includegraphics[width=130mm]{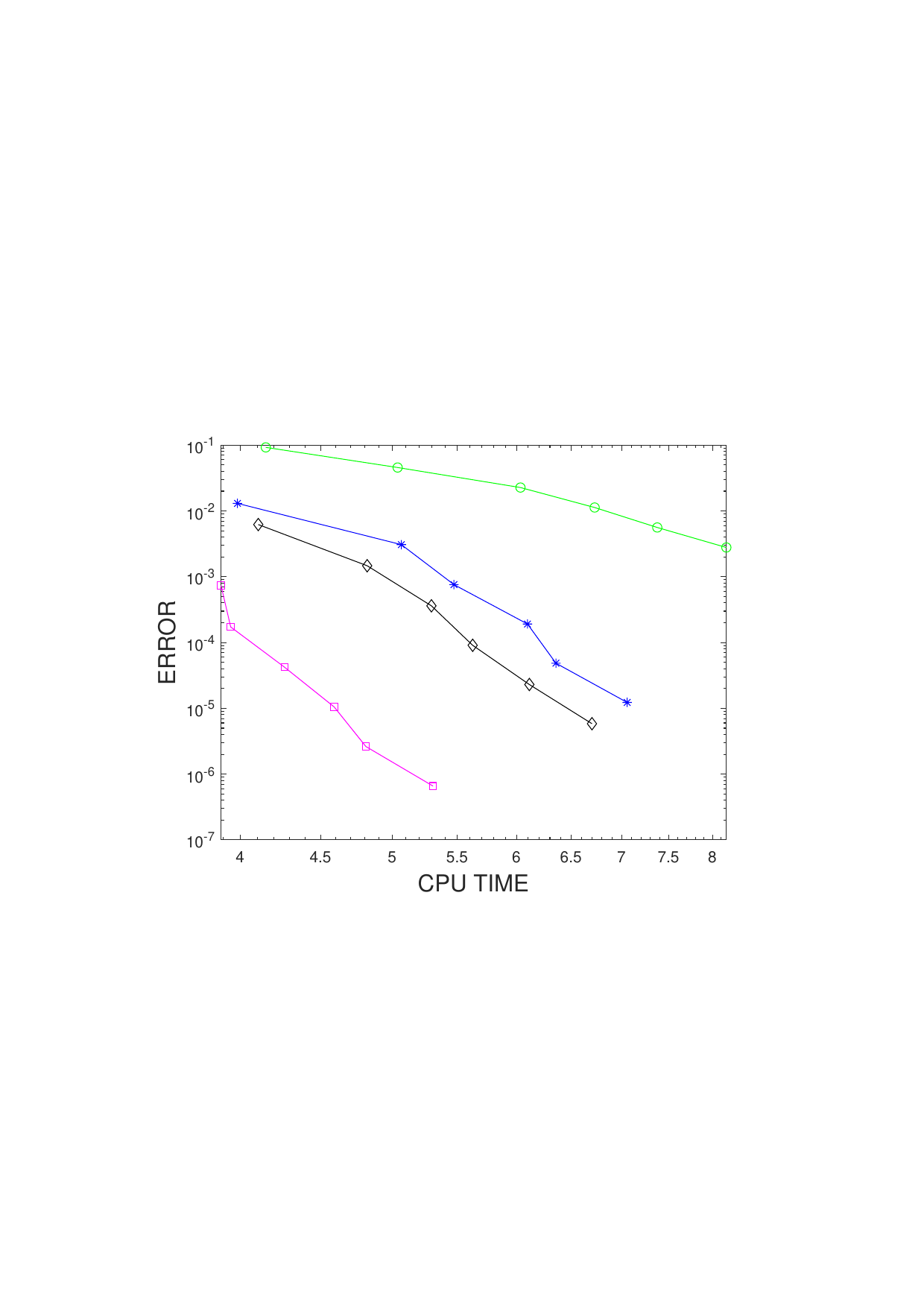}
\end{center}
\vspace{-5cm}
\caption{Error against CPU time when integrating problem (\ref{p1}) with nonvanishing boundary conditions (\ref{dir}), using (\ref{rk2}) without avoiding order reduction
(green circles), the suggested technique (\ref{rk2p1}) with $p=1$ (blue asterisks), the suggested technique  corresponding to $p=2$  (\ref{rk2p2}) (magenta squares) and the method with stiff order $2$ (\ref{rk2b}) using the standard method of lines (black diamonds).}
\label{f1}
\end{figure}

\begin{figure}
\vspace{-4cm}
\begin{center}
\includegraphics[width=130mm]{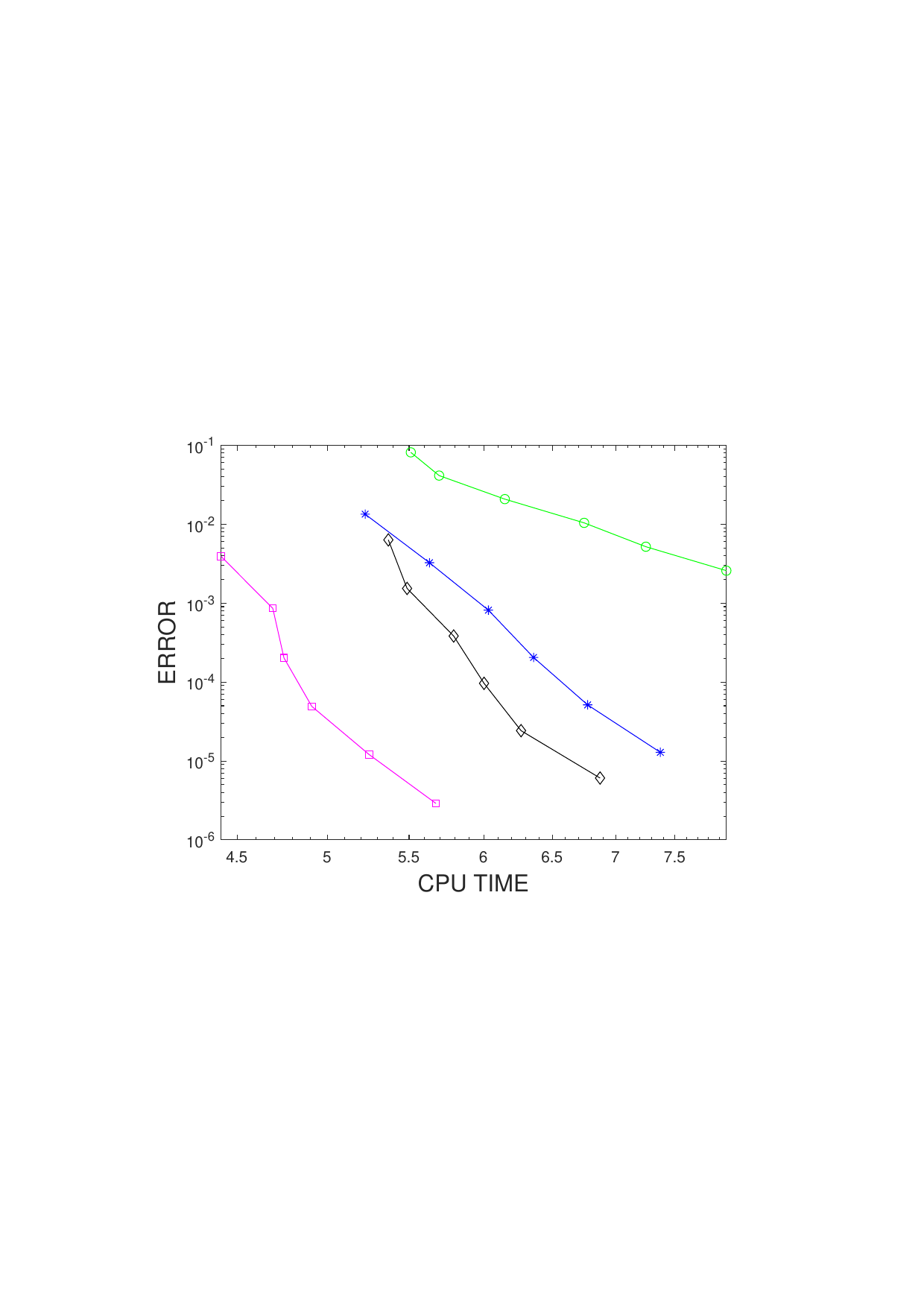}
\end{center}
\vspace{-5cm}
\caption{Error against CPU time when integrating problem (\ref{p1}) with nonvanishing boundary conditions (\ref{dn}), using (\ref{rk2}) without avoiding order reduction
(green circles), the suggested technique (\ref{rk2p1}) with $p=1$ (blue asterisks), the suggested technique  corresponding to $p=2$ (\ref{rk2p2}) (magenta squares) and the method with stiff order $2$ (\ref{rk2b}) using the standard method of lines (black diamonds).}
\label{f1dn}
\end{figure}

We have firstly considered the one-dimensional problem
\begin{eqnarray}
u_t=u_{xx}+u^2 +h(x,t), \quad x \in [0,1],\quad  t\in [0,1],
\label{p1}
\end{eqnarray}
with function $h$ and initial and boundary conditions so that the exact solution is $u(x,t)=\cos(x+t)$. In the first place, we consider  Dirichlet boundary conditions at both sides
\begin{eqnarray}
u(0,t)=g_0(t), \quad u(1,t)=g_1(t),
\label{dir}
\end{eqnarray}
and, secondly, Dirichlet at $x=0$ and Neumann at $x=1$,
\begin{eqnarray}
u(0,t)=g_0(t), \quad u_x(1,t)=g_1(t).
\label{dn}
\end{eqnarray}
As stated in \cite{cm_erk}, the assumptions there are then satisfied with $X=C[0,1]$ and $\|\cdot\|$ the maximum norm.

Then, for the space discretization, we have taken the standard second-order difference scheme, for which $D_h\equiv 0$ in (\ref{sd}) and,  in the case of two  Dirichlet boundary conditions,
$$A_{h,0}=\mbox{tridiag}(1,-2,1)/h^2, \quad C_h[g_0(t),g_1(t)]^T =[g_0(t),0,\dots,0,g_1(t)]^T/h^2,$$
while, in the Dirichlet and Neumann one, $A_{h,0}$ has one more row and column since the value at the final node is also numerically approximated. More precisely, by considering a centered difference in that last node to approximate the Neumann condition,
$$
A_{h,0}=\frac{1}{h^2} \left( \begin{array}{cccccc} -2 & 1 & 0 & \dots & \dots & 0 \\ 1 & -2 & 1 & 0 & \dots & \vdots \\ 0 & 1  & -2 & 1 & \ddots & \vdots \\
\vdots & 0 & \ddots & \ddots & \ddots & 0 \\ \vdots & & \ddots &1 & -2& 1 \\ 0 & \dots & \dots &0 &2 & -2 \end{array}\right), \quad C_h \left( \begin{array}{c} g_0(t) \\ g_1(t) \end{array} \right)=\left( \begin{array}{c} \frac{1}{h^2} g_0(t) \\ 0 \\ \vdots \\\frac{2}{h}g_1(t) \end{array} \right).
$$
For the time integration, we have taken the EERK method of classical order $2$
\begin{eqnarray}
\begin{array}{c|cc} 0 &  & \\ \frac{1}{2} & \frac{1}{2}\varphi_{1,2} &  \\  \hline & 0 & \varphi_1 \end{array}. \label{rk2}
\end{eqnarray}
As this method has $2$ stages, the hypotheses of Theorem \ref{th} are satisfied for $q=2$ and, because of that, the simplifying assumptions apply. Because of that, the full discretization formulas with $p=1$ can be applied and the method reads
\begin{eqnarray}
K_{n,1,h}&=&U_h^n, \nonumber \\
K_{n,2,h}&=&e^{\frac{k}{2} A_{h,0}}U_h^n +\frac{k}{2}\varphi_1(\frac{k}{2} A_{h,0})[F_{n,1,h}+C_h \partial u(t_n)], \nonumber \\
U_h^{n+1}&=&e^{k A_{h,0}}U_h^n+k \varphi_1(k A_{h,0})[F_{n,2,h}+C_h \partial u(t_n)]+k^2 \varphi_2(k A_{h,0}) \partial \dot{u}(t_n). \label{rk2p1}
\end{eqnarray}
This implies that local order $2$ is achieved and, by a summation-by-parts argument, also global order is obtained (see \cite{cm_erk}).

We can also check that (\ref{cond_lambda}) is satisfied. Because of that, the full discretization simplified formulas for $p=2$ can be applied. In this case, this reads
\begin{eqnarray}
K_{n,1,h}&=&U_h^n, \nonumber \\
K_{n,2,h}&=&e^{\frac{k}{2} A_{h,0}}U_h^n+\frac{k}{2} \varphi_1(\frac{k}{2}A_{h,0})[F_{n,1,h}+C_h \partial u(t_n)]+\frac{k^2}{4} \varphi_2(\frac{k}{2}A_{h,0}) C_h \partial \dot{u}(t_n), \nonumber \\
U_h^{n+1}&=& e^{k A_{h,0}}U_h^n+k \varphi_1(k A_{h,0})[F_{n,2,h}+C_h \partial u(t_n)] \nonumber \\
&&+k^2 \varphi_2(k A_{h,0})C_h \partial \big[\dot{u}(t_n)+\frac{k}{2}[f_t(t_n,u(t_n))+f_u(t_n,u(t_n))\dot{u}(t_n)]\big] \nonumber \\
&&+k^3 \varphi_3(k A_{h,0})C_h \partial [\ddot{u}(t_n)-f_t(t_n,u(t_n))-f_u(t_n,u(t_n))\dot{u}(t_n)].
\label{rk2p2}
\end{eqnarray}
As stated in Remark \ref{rem1}, with both Dirichlet and Neumann boundary conditions and $p=1$, all the terms on the boundary can be exactly calculated in terms of data. As for $p=2$, as stated in Remark \ref{rem2}, with Dirichlet boundary condition the required boundaries can be exactly calculated in terms of data and, with Neumann condition, time numerical differentiation for a first derivative is required. For that, we have used a second-order Taylor expansion for the first time step and then, a 2-BDF formula for the next ones.

We have compared the different techniques which have been described here with the standard method of lines, which consists of applying (\ref{rk2}) in the form of (\ref{eerk}) to the space discretization of (\ref{laibvp}), i.e., to
\begin{eqnarray}
\dot{U}_h(t)&=& A_{h,0} U_h(t)+C_h g(t)+U_h.^2 +P_h h(t), \nonumber \\
U_h(0)&=& P_h u_0. \label{waor}
\end{eqnarray}
We can see that procedure (\ref{rk2p2}) is the most efficient for both types of boundary conditions. (See Figures \ref{f1} and \ref{f1dn}, where it is clear that, for a same amount of CPU time, the technique with the smallest error is the one corresponding to the suggested technique with $p=2$. A thorough explanation for that is given in \cite{CR1}).

We have also compared in the same figures with another EERK method in \cite{HO},
\begin{eqnarray}
\begin{array}{c|cc} 0 &  & \\ \frac{1}{2} & \frac{1}{2}\varphi_{1,2} &  \\  \hline & \varphi_1-2 \varphi_2 & 2 \varphi_2 \end{array}, \label{rk2b}
\end{eqnarray}
which happens to have stiff order $2$ for problem (\ref{p1}). (Notice that this method and (\ref{rk2}) have the same underlying RK method.) When implementing this method by integrating directly (\ref{waor}) with the formulas in (\ref{eerk}), order $2$ is observed, but is much less efficient than the procedure in (\ref{rk2p2}).

\subsection{Bidimensional problem}

\begin{figure}
\begin{center}
\vspace{-4cm}
\includegraphics[width=130mm]{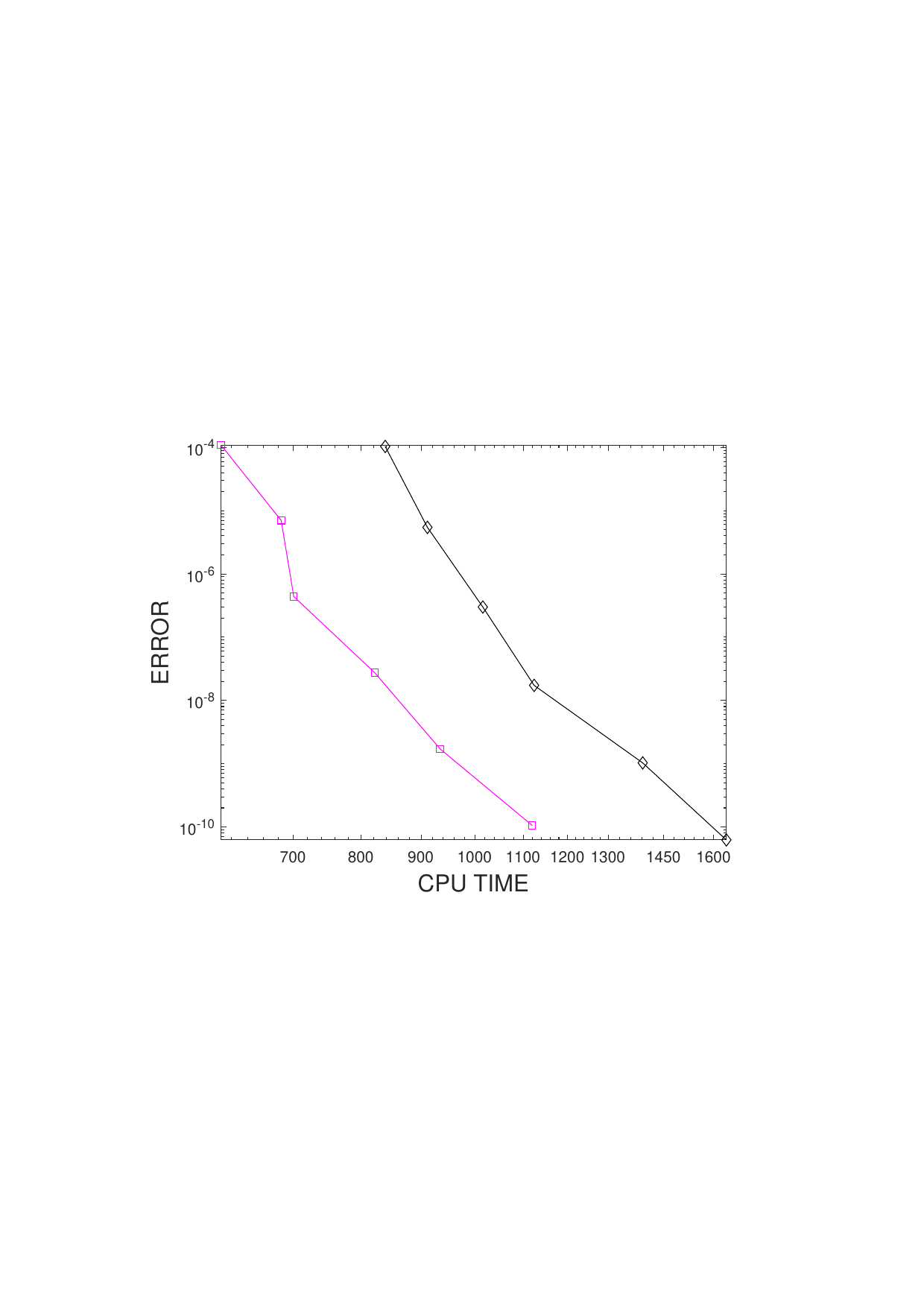}
\vspace{-5cm}
\end{center}
\caption{Error against CPU time when integrating problem (\ref{p2}), using (\ref{krogstad}) with the suggested technique (\ref{fdk3})-(\ref{fdu3}) with $p=3$ (magenta squares) and the method in \cite{LO} with stiff order $4$ when applied directly to (\ref{sdd}) (black diamonds).}
\label{f2}
\end{figure}

We have also considered the bidimensional problem
\begin{eqnarray}
u_t=u_{xx}+u_{yy}+u^2+h(x,y,t), \quad x,y\in [0,1], \quad t\in [0,1], \label{p2}
\end{eqnarray}
with function $h$ and initial and Dirichlet boundary conditions such that the exact solution is $u(x,y,t)=\cos(t+x+y)$.
As space discretization, we have taken the $9$-point formula \cite{S}, which determines certain matrices $A_{h,0}$, $C_h$ and $D_h$ in (\ref{sd}).  For the time integration, we have considered Krogstad's method \cite{K}
\begin{eqnarray}
\begin{array}{c|cccc} 0 &  &  & & \\ \frac{1}{2} & \frac{1}{2}\varphi_{1,2} & & & \\ \frac{1}{2} & \frac{1}{2}\varphi_{1,3}-\varphi_{2,3}& \varphi_{2,3} &  & \\ 1 & \varphi_{1,4}-2 \varphi_{2,4}& 0 & 2\varphi_{2,4} & \\
 \hline & \varphi_1-3 \varphi_2+4 \varphi_3 & 2\varphi_2-4 \varphi_3& 2\varphi_2-4 \varphi_3 & 4\varphi_3-\varphi_2 \end{array}, \label{krogstad}
\end{eqnarray}
which has classical order $4$ and $4$ stages. Because of that, Theorem \ref{th} can be applied and therefore (\ref{cond_mu}) follows. We also notice that condition (\ref{cond_lambda}) is satisfied. We have then applied formulas (\ref{fdk3})-(\ref{fdu3}) to implement the suggested technique to avoid order reduction with $p=3$. In such a way, we get both local and global order $4$. We notice that many terms cancel in these formulas since $\lambda_{i,j,l}=0$ for $l=3,4$ and $\mu_{i,4}=0$. We have again used Krylov methods \cite{nw} to calculate the exponential operators (We notice that, as the matrix $A_{h,0}$ is not so sparse in this case because it is just the product of the inverse of a sparse mass matrix times another sparse matrix, a modification of the subroutines in \cite{nw} has been used following the lines of Section 7.1 in \cite{GG}, so as to take profit of the highly sparse structure of the underlying matrices.) As stated in Section \ref{sect23}, no numerical differentiation is required for the stages. However, as stated in Remark \ref{rem3}, for the approximation of $\partial A(f_t+f_u \dot{u})$ in $U_h^{n+1}$, numerical differentiation is required. In a first place,  $u_x$ and $u_y$ are required at the boundary, for which we have used a 4th-order BDF formula in the sides of the square in which we could not calculate it directly in terms of $g$. In a second place,  $\dot{u}_x$, $\dot{u}_y$ at the boundary are also required. We have approximated them using $4$th-order BDF formulas.

We have also made a comparison in CPU time of the suggested tecnique using $p=3$ with another method which shows order 4 without avoiding order reduction. We firstly tested with the $5$-stages method (5.19) in \cite{HO} because it was suggested there as a $4$th-order method for stiff problems with vanishing boundary conditions, but it happened to show just order $3$ with non-vanishing boundary values. Because of that, we decided to consider the $5$th-order method for stiff problems and vanishing boundary conditions which is constructed in \cite{LO}. This method is $4$th-order accurate when applied directly to the space discretized system
\begin{eqnarray}
 \dot{U}_h(t)&=&A_{h,0} U_h(t)+C_h g(t)+U_h.^2+P_h h(t)+D_h[(g(t)^2+\partial h(t)-\dot{g}(t)], \nonumber \\
 U_h(0)&=&P_h u(0), \label{sdd}
\end{eqnarray}
and has $8$ stages.
The comparison in CPU time is shown in Figure \ref{f2}, where it is observed that the latter is about $1/2$ more expensive than the suggested modification of Krogstad's method when avoiding order reduction.

\noindent {\bf Acknowledgments.} This research has been supported by  Junta de Castilla y Le\'on and Feder through project VA169P20.


\begin{thebibliography}{}
\bibitem{cm_erk} B. Cano and M. J. Moreta, {\it Solving nonlinear initial boundary value problems with explicit Runge-Kutta exponential methods without order reduction}, submitted for publication. (see the first part of arXiv:2211.11318)
\bibitem{CR1} B. Cano and N. Reguera, {\it  Why improving the accuracy of exponential integrators can decrease their computational cost?}, Mathematics  \textbf{9(9)} (2021), 1008.
\bibitem{GG} T. G\"ockler and V. Grimm, {\it Convergence analysis of an extended Krylov subspace method for the approximation of operator functions in exponential integratiors}, SIAM J. Numer. Anal.  \textbf{53} (4) (2013) 2189--2213.
\bibitem{GRT} S. Gaudreault, G. Rainwater and M. Tokman, {\it KIOPS: A fast adaptive Krylov subspace solver for exponential integrators}, J. Comp. Phys. \textbf{372} (2018) 236--255.
\bibitem{HL97} M. Hochbruck and C. Lubich, {\it On Krylov Subspace Approximations to the Matrix Exponential Operator}, SIAM J. Numer. Anal.
\textbf{34} (5) (1997) 1911--1925.
\bibitem{HO} M. Hochbruck and A. Ostermann, {\it Explicit exponential Runge-Kutta methods for semilinear parabolic
    problems}, SIAM J. Num. Anal. \textbf{43} (2005), 1069--1090.
\bibitem{HO2} M. Hochbruck and A. Ostermann, {\it Exponential
integrators}, Acta Numerica (2010), 209-286.
\bibitem{K} S. Krogstad, {\it Generalized integrating factor methods for stiff PDEs}, J. Comp. Phys. \textbf{203} (2005), 72--88.
\bibitem{LO0} V.T. Luan, A. Ostermann, {\it Stiff order conditions for exponential Runge-Kutta methods of order five}, Modeling, Simulation and Optimization of Complex Processes-HPSC (2012), 133--143.
\bibitem{LO} V.T. Luan, A. Ostermann, {\it
Explicit exponential Runge-Kutta methods of high order for parabolic problems},
J. Comput. Appl. Math. \textbf{262} (2014), 361--372.
\bibitem{ML} C. Moler \& C. V. Loan,  {\it Nineteen Dubious Ways to Compute the Exponential of a Matrix, Twenty-Five Years Later}, SIAM Review \textbf{45} (1) (2003), 3--49.
\bibitem{nw}  J. Niesen and W. M.  Wright, {\it
Algorithm 919: a Krylov subspace algorithm for evaluating the $\varphi$-functions appearing in exponential integrators}, ACM Trans. Math. Software 38, no. 3, Art. 22 (2012).
\bibitem{S} J. C. Strikwerda, Finite Difference Schemes and Partial Differential Equations, Wadsworth \& Brooks, United States of America (1989).
\bibitem{T} M. Tokman, {\it Efficient integration of large stiff systems of ODEs with exponential propagation iterative (EPI) methods}, J. Comp. Phys. \textbf{213} (2006), 748--776.
\end{thebibliography}
\end{document}